\newlength{\defbaselineskip}
\newcommand{\setlinespacing}[1]%
           {\setlength{\baselineskip}{#1 \defbaselineskip}}
\theoremstyle{plain}
\newtheorem{theorem}{Theorem}[section]
\theoremstyle{definition}
\newtheorem{definition}[theorem]{Definition}
\newtheorem{remarks}[theorem]{Remark}
\newtheorem{example}[theorem]{Example}
\begin{document}

\begin{center}
\vspace{3cm}
 {\bf \large Fibonacci Fervour in Linear Algebra and Quantum Information Theory}\\
\vspace{3cm}

{\bf Manami Chatterjee$^*$, Ajit Iqbal Singh$^{**}$ and K.C. Sivakumar$^*$}\\
\vspace{.25cm}
$^*$Department of Mathematics\\
Indian Institute of Technology Madras\\
Chennai 600 036, India. \\
\vspace{.25cm}
$^{**}$INSA Emeritus Scientist\\
The Indian National Science Academy \\
New Delhi 110 002, India.\\
\end{center}

\begin{abstract}
This is a survey on certain results which bring about a connection between Fibonacci sequences on the one hand and the areas of matrix theory and quantum information theory, on the other. 
\end{abstract}

\newpage
\section{Introduction}
As evidenced in the mathematics literature, several mathematical problems eventually reduce to studying one sequence of numbers or the other. In this regard, the Fibonacci sequence is one of the most well known sequences of positive integers that are characterized by the fact that every term after the first two terms is the sum of the two preceding ones, given the first two terms. The name Fibonacci comes after the Italian mathematician Leonardo of Pisa, known as Fibonacci (1175 - 1250). For a natural number $n$, the $n$th Fibonacci number is denoted by $F_n$ and they are defined recursively by $F_1=F_2=1$ and $F_n=F_{n-1}+F_{n-2}$ for $n\geq 3$. (Later, in many cases even $F_0$ is being used with value zero).

It must be pointed out that there are many journals in which there is a lot of literature exploring Fibonacci numbers and their different aspects. However, a full journal by the name ``The Fibonacci Quarterly'' has been completely devoted to their study. It is noteworthy that the Fibonacci sequence occurs very frequently in mathematics and many of their interesting properties have been established. Let us recall that the applications of Fibonacci sequences include computer algorithms like the Fibonacci search technique and the Fibonacci heap data structure. Graphs called Fibonacci cubes are used in parallel and distributed systems. It is amusing and amazing at the same time, that Fibonacci numbers are used to explain certain natural biological structures called phyllotaxis, the sprouts of some fruits like pineapple, an uncurling fern and in the manner in which branches appear in trees. 

In the literature, there is an abundance of results on Fibonacci sequences and that makes a complete short survey virtually impossible. Our interest in this survey article is to study certain interesting connections between Fibonacci numbers, their relationships with certain aspects in the area of Matrix Theory and some applications, particularly in Quantum Information Theory.

Let us start with an interesting illustration on how Fibonacci numbers and matrices are related. Note that one has 
\begin{eqnarray}
det\left( \begin{array}{cc}
F_n & F_{n-1}\\
F_{n+1} & F_n
\end{array} \right)&=&det\left( \begin{array}{cc}
F_n & F_{n-1}\\
F_{n+1}-F_n & F_n-F_{n-1}
\end{array} \right)\nonumber\\
&=&det\left( \begin{array}{cc}
F_n & F_{n-1}\\
F_{n-1} & F_{n-2}
\end{array} \right)\nonumber\\
&=&-det\left( \begin{array}{cc}
F_{n-1} & F_{n-2}\\
F_n & F_{n-1}
\end{array} \right).\nonumber
\end{eqnarray}
Thus, $F_n^2-F_{n-1}F_{n+1}=F_nF_{n-2}-F_{n-1}^2.$ Proceeding in this manner inductively, it follows that $F_n^2-F_{n-1}F_{n+1}=(-1)^{n+1}$.

As another instance, starting with the symmetric matrix $\left( \begin{array}{cc}
1 & 1\\
1 & 0
\end{array} \right)$ and by computing its eigenvalues $\lambda=\frac{1+\sqrt{5}}{2}$ and $\mu=\frac{1-\sqrt{5}}{2}$ and a corresponding orthogonal basis of eigenvectors $\bigg\{\left( \begin{array}{c}
\lambda \\
1
\end{array} \right), \left( \begin{array}{c}
\mu \\
1
\end{array} \right)\bigg\}$, one may determine the exact value of the $n$th Fibonacci number using the equation 
\begin{equation}
\left( \begin{array}{c}
F_n \\
F_{n-1}
\end{array} \right)= \left( \begin{array}{cc}
1 & 1\\
1 & 0
\end{array} \right)\left( \begin{array}{c}
F_{n-1} \\
F_{n-2}
\end{array} \right),\nonumber
\end{equation}
and by proceeding by the principle of induction to show that 
\begin{equation}
\left( \begin{array}{c}
F_n \\
F_{n-1}
\end{array} \right)=\left( \begin{array}{cc}
1 & 1\\
1 & 0
\end{array} \right)^{n-1}\left( \begin{array}{c}
F_1 \\
F_0 
\end{array} \right)=\left( \begin{array}{cc}
1 & 1\\
1 & 0
\end{array} \right)^{n-1}\left( \begin{array}{c}
1 \\
0 
\end{array} \right).\nonumber
\end{equation}
Using the above, one can obtain a closed form expression of the $n$th Fibonacci number, as 
\begin{center}
$F_n=\frac{(1+\sqrt{5})^n-(1-\sqrt{5})^n}{2^n \sqrt{5}}.$
\end{center}
The technique can be modified to be applicable to any recursion formula like Fibonacci numbers, say given by a matrix of the form $\left( \begin{array}{cc}
a & b\\
1 & 0
\end{array} \right)$, with $a, b >0$ or using different initial column vectors. We will have occasion to use this latter, in the last section.

Here is an outline of the contents of this survey, which may be considered to have mainly two overarching objectives. The first purpose is to present an overview of some hand picked results in Linear Algebra which have Fibonacci fervour. This task is undertaken in Section \ref{pmfne}, which in turn has various subsections focussing on specific topics, viz., general matrices with entries given by Fibonacci numbers in subsections \ref{Fibo} and \ref{kfibo} and, the case of circulant matrices in subsection \ref{circfibo}. While subsection \ref{dfibo} briefly surveys special matrices whose determinants are Fibonacci numbers, the last subsection of Section \ref{pmfne} recalls some rather recent results on certain interesting relationships between sums of entries of $\{0,1\}$-matrices and Fibonacci numbers. The second aim of this survey is to present an exposition of essentially fundamental ideas in Quantum Information Theory that have certain genuine connections with Fibonacci and Lucas number sequences. This is presented in Section \ref{fibqit}. Starting with a detailed discussion for the case of two dimensions as motivation in subsection \ref{twodim}, we move on to higher dimensions in subsection \ref{highdim}. The concept of symmetric informationally complete positive operator valued measure (SIC-POVM) is the focus in subsection 3.3
, whereas subsection \ref{sicant} brings in its relevance to algebraic number theory. The penultimate subsection \ref{flsic} deals with certain particular results between SIC-POVM and the number sequences of Fibonacci and Lucas, completing the circle of discussion. 

\section{Properties of Matrices with Fibonacci Numbers as its Entries}\label{pmfne}
In this section, we present a review of the various properties of matrices whose entries are either Fibonacci numbers or their variants.

\subsection{Fibonacci Matrices}\label{Fibo}
The name {\it Fibonacci matrix} is used in a variety of contexts in the literature. For instance, Lee, Kim and Lee \cite{LKL} proposed the following definition: Let $\mathbb{F}_n(=[f_{ij}])$ be the $n\times n$ matrix whose entries are given by
\begin{center}
	$f_{ij}=\left\{\begin{array}{cc}
	F_{i-j+1}, & i-j+1> 0\\
	0, & i-j+1\leq 0
	\end{array} \right.$
\end{center}
They studied certain factorizations of these and their symmetric versions. Here, the $n\times n$ {\it symmetric Fibonacci matrix} $\mathbb{Q}_n=[q_{ij}]$ is defined as follows: Let $q_{i0}$ be set to zero. Next let,
\begin{center}
	$q_{ij}=q_{ji}=\left\{\begin{array}{cc}
	\sum_{k=1}^{i}F_k^2, & i=j\\
	q_{i,j-2}+q_{i,j-1}, & i+1\leq j
	\end{array}\right.$
\end{center}
They showed that $\mathbb{Q}_n$ has the Cholesky factorization given by $\mathbb{Q}_n=\mathbb{F}_n \mathbb{F}_n^T$.  

\subsection{$k$-Fibonacci Matrices}\label{kfibo}
Lee and Kim extended the concepts given in \ref{Fibo} to $k$-Fibonacci matrices and $k$-symmetric Fibonacci matrices \cite{LK}. For a positive $k\geq 2$, the {\it $k$-Fibonacci sequence} $\{F(k)_n\}$ is defined as: 
\begin{center}
$F(k)_1=F(k)_2=\cdots =F(k)_{k-2}=0$, $F(k)_{k-1}=F(k)_k=1$
\end{center}
and for $n> k (\geq 2)$, 
\begin{center}
$F(k)_n=F(k)_{n-1}+F(k)_{n-2}+\cdots +F(k)_{n-k}$.
\end{center}
Then an $n\times n$ {\it $k$-Fibonacci matrix} $\mathbb{F}(k)_n=[f(k)_{ij}]_n$ is defined for a fixed $k\geq 2$ via,
\begin{center}
	$f(k)_{ij}=\left\{\begin{array}{cc}
	F(k)_{i-j+1+(k-2)}, & i-j+1> 0\\
	0, & i-j+1\leq 0
	\end{array} \right.$
\end{center} 
Also the $n\times n$ {\it $k$-symmetric Fibonacci matrix} $\mathbb{Q}(k)_n=[q(k)_{ij}]_n$ is defined via,
\begin{center}
	$q(k)_{ij}=\left\{\begin{array}{cc}
	\sum_{l=1}^{k}q(k)_{i,j-l}, & i+1\leq j\\
	\sum_{l=1}^{k}q(k)_{i,i-l}+F(k)_{k-1}, & i=j,
	\end{array} \right.$
\end{center}
where $q(k)_{ij}=0$ for $j\leq 0$. Then $\mathbb{F}(2)_n$ and $\mathbb{Q}(2)_n$ reduce to $\mathbb{F}_n$ and $\mathbb{Q}_n$ described earlier in subsection \ref{Fibo}. The Cholesky factorization of $\mathbb{Q}(k)_n$ is given by $\mathbb{Q}(k)_n=\mathbb{F}(k)_n \mathbb{F}(k)_n^T$. Some factorizations of $\mathbb{F}(k)_n$ are also given in \cite{LK}.
		
Several other matrices have been studied in the literature like the Bell matrix, the Pascal matrix and Stirling matrices of various types, with which Fibonacci matrices are related. Lee, Kim, Cho \cite{LKC} gave factorizations of the Pascal matrix and Stirling matrices, in terms of Fibonacci matrices. Also, they discussed some identities involving Fibonacci numbers and binomial coefficients. Zhang and Wang \cite{ZW} revealed a factorization for the Pascal matrix involving Fibonacci matrices. Subsequently, Wang and Wang \cite{WW} obtained a factorization with one factor as a Fibonacci matrix, for the Bell matrix.

\subsection{Circulant Matrices Whose Entries are Fibonacci Numbers}\label{circfibo}
There are notions of circulant matrices (see \cite{Geller}, \cite{Kra}) with Fibonacci numbers as their entries. An $n\times n$ {\it right circulant matrix} $C=Circ(c_0, c_1, \cdots , c_{n-1})$ is of the form 
\begin{center}
$C=\left(\begin{array}{ccccc}
c_0 & c_1 & c_2 & \cdots & c_{n-1}\\
c_{n-1} & c_{0} & c_1 & \cdots & c_{n-2}\\
c_{n-2} & c_{n-1} & c_0 & \ddots & c_{n-3} \\
\colon & \colon & \ddots & \ddots & \colon\\
c_1 & c_2 & c_3 & \cdots & c_0
\end{array} \right). $
\end{center}
Observe that each row is a cyclic shift of the previous row to the right. If such a shift is to the left then we obtain a {\it left circulant matrix} and it is denoted by $LCirc(c_0, c_1, \cdots, c_{n-1})$. The following theorem tells us about the invertibility of a right circulant matrix:

\begin{theorem}\label{circinv}
Let $V_n= Circ(v_0, v_1,\cdots v_{n-1})$. Then we have the following:\\
$(a)$ Let $f(x)= \sum _{j=0}^{n-1} v_jx^j$ and $\omega = exp(2\pi i/ n)$. Then $V_n$ is invertible if, and only if, $f(\omega ^k)\neq 0$ for all $k=0, 1, 2, \cdots, n-1$.\\
$(b)$ If $V_n$ is invertible, then its inverse is also a right circulant matrix.
\end{theorem}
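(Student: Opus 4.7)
My plan is to exploit the fact that every right circulant matrix is a polynomial in the single cyclic shift matrix $P=Circ(0,1,0,\ldots,0)$. One checks directly that $P^{j}=Circ(e_{j})$, where $e_{j}$ denotes the $j$-th standard basis row, and hence
\[
V_{n}=\sum_{j=0}^{n-1}v_{j}P^{j}=f(P).
\]
So the whole statement will follow once we understand the spectrum of $P$ and use the fact that $P$ has the $n$ distinct eigenvalues $1,\omega,\omega^{2},\ldots,\omega^{n-1}$.

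For part (a), I would diagonalise $P$ via the Fourier matrix $F$ with $F_{jk}=n^{-1/2}\omega^{jk}$; a short calculation shows that the $k$-th column of $F$ is an eigenvector of $P$ with eigenvalue $\omega^{k}$, so $P=FDF^{*}$ with $D=\mathrm{diag}(1,\omega,\ldots,\omega^{n-1})$. Substituting into $V_{n}=f(P)$ gives
\[
V_{n}=F\,\mathrm{diag}\bigl(f(1),f(\omega),\ldots,f(\omega^{n-1})\bigr)\,F^{*}.
\]
Since $F$ is unitary, $V_{n}$ is invertible precisely when the diagonal factor is invertible, i.e.\ when $f(\omega^{k})\neq 0$ for every $k=0,1,\ldots,n-1$.

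For part (b), assume $V_{n}$ is invertible, so all $f(\omega^{k})$ are nonzero. The points $1,\omega,\ldots,\omega^{n-1}$ are distinct, so Lagrange interpolation produces a polynomial $g(x)=\sum_{j=0}^{n-1}w_{j}x^{j}$ of degree at most $n-1$ with $g(\omega^{k})=1/f(\omega^{k})$ for every $k$. Applying the spectral formula to $g$,
\[
g(P)=F\,\mathrm{diag}\bigl(g(1),\ldots,g(\omega^{n-1})\bigr)\,F^{*}=V_{n}^{-1}.
\]
But $g(P)=\sum_{j=0}^{n-1}w_{j}P^{j}=Circ(w_{0},w_{1},\ldots,w_{n-1})$ by the opening identification, so $V_{n}^{-1}$ is itself a right circulant matrix.

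The main nuisance, rather than a genuine obstacle, will be bookkeeping around conventions: fixing the precise form of $P$ so that the formula $V_{n}=f(P)$ holds with the row-shift convention used in the paper, and making sure the indexing in the Fourier decomposition $P=FDF^{*}$ matches. Once these conventions are locked down, both conclusions reduce to standard spectral facts about a unitary change of basis and the fact that polynomials in $P$ of degree less than $n$ are exactly the right circulant matrices.
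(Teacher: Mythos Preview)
Your argument is correct and is in fact the standard route to this classical result: write $V_n=f(P)$ for the shift matrix $P$, diagonalise $P$ by the discrete Fourier matrix, read off the eigenvalues $f(\omega^k)$, and for part~(b) use Lagrange interpolation to realise $V_n^{-1}$ as a polynomial in $P$ of degree less than $n$, hence as a circulant. The bookkeeping concerns you mention are minor and your conventions are consistent with the paper's definition of $Circ(c_0,\ldots,c_{n-1})$.

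As for comparison: the paper is a survey and does not actually supply a proof of this theorem; it simply records it as a known fact (with references to \cite{Geller} and \cite{Kra}) before moving on to applications. So there is no ``paper's own proof'' to compare against, but your approach is exactly the one found in those references and in the standard literature on circulant matrices.
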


Continuing with our discussion on circulant matrices, Lind gave a determinant formula for $ Circ(F_r, F_{r+1}, \cdots, F_{r+n-1})$ $(r\geq 1)$ \cite{Lind}. Later, using Theorem \ref{circinv} and other tools,  Shen, Cen and Hao considered circulant matrices with Fibonacci and Lucas numbers as entries \cite{Shen}. They obtained the value of their determinants and computed their inverses, explicitly. 
One must mention that {\it Lucas numbers} are closely related to Fibonacci numbers and defined as $L_n=L_{n-1}+L_{n-2}$, $n\geq 3$ with $L_1=1$ and $L_2=3$ as the two initial terms.

 Altin\c sik, Yal\c cin and B\"uy\"ukk\"ose studied $Circ(F_1^*, F_2^*, \cdots, F_n^*)$ and showed that such matrices are invertible \cite{Altin}. Here, $F_n^*$ stands for the {\it complex Fibonacci number} defined as $F_n^*= F_n+iF_{n+1}$. They also computed their determinants and inverses. Jiang, Gong and Gao introduced two new sequences in terms of the sum and product of Fibonacci and Lucas numbers \cite{Gong}. For right circulant and left circulant matrices whose entries are given by these sums, they determined the inverses and determinants. For similar types of results involving $k$-Fibonacci and $k$-Lucas numbers, we refer the reader to the work by Jiang, Gong and Gao \cite{kFibo}.

Simultaneously, two other notions namely, skew circulant matrices and skew left circulant matrices have been considered in the literature. A {\it skew circulant matrix} and a {\it left skew circulant matrix} with first row $(c_0, c_1, \cdots , c_{n-1})$ are defined as 
\begin{center}
$\left(\begin{array}{ccccc}
	c_0 & c_1 & c_2 & \cdots & c_{n-1}\\
	-c_{n-1} & c_{0} & c_1 & \cdots & c_{n-2}\\
	-c_{n-2} & -c_{n-1} & c_0 & \ddots & c_{n-3} \\
	\colon & \colon & \ddots & \ddots & c_1\\
	-c_1 & -c_2 & -c_3 & \cdots & c_0
	\end{array} \right)$
\end{center}
and 
\begin{center}
$\left(\begin{array}{cccccc}
	c_0 & c_1 & \cdots & c_{n-3} & c_{n-2} & c_{n-1}\\
	c_1 & c_2 & \cdots & c_{n-2} & c_{n-1} & -c_0\\
	c_2 & c_3 & \cdots & c_{n-1} & -c_0 & -c_1 \\
	\vdots & \vdots & \ddots & \ddots & \vdots & \vdots\\
	c_{n-2} & c_{n-1} & -c_0 & \cdots & -c_{n-4} & -c_{n-3}\\
	c_{n-1} & -c_0 & -c_1 & \cdots & -c_{n-3} & -c_{n-2}
	\end{array} \right)$
\end{center}
respectively. Gao, Jiang and Gong calculated the determinants and inverses of such skew circulant matrices involving Fibonacci and Lucas numbers \cite{Gao}. Jiang, Yao and Lu studied the matrices $SCirc(F_{r+1}, F_{r+2},\cdots, F_{r+n})$ and $SLCirc(F_{r+1}, F_{r+2},\cdots, F_{r+n})$ \cite{Jian}. They presented explicit determinants and inverses of these special matrices which reduce to the formulae of \cite{Gao} for $r=0$. Again, matrices whose entries are defined as the sum of Fibonacci and Lucas numbers were considered, this time in the context of skew circulant and skew left circulant matrices by Jiang and Wei in the work \cite{Zhao}. Formulae for determinants and inverses of such matrices were presented. 
	
Karaduman \cite{Erdal}, defined $k$-sequences of the generalized order-$k$-Fibonacci numbers and studied the determinants of matrices consisting of these numbers. We refer the reader to the work by Fu and Zhou \cite{Fu} for the definition of general order-$k$ sequence and determinants of special matrices whose entries have been taken from these general order-$k$ sequences. Most of the results of \cite{Erdal} come as a special case in \cite{Fu}. In the work of Tasci and Kilic, reported in \cite{Tasci}, the authors defined the order-$k$ generalized Lucas numbers.  These are a particular case of the numbers defined in \cite{Erdal}. They investigated some interesting properties of such sequences. We also would like to mention the work of Karaduman \cite{Erdal1} for properties of the determinants of matrices obtained by generalized order-$k$ Fibonacci numbers.

\subsection{Fibonacci Numbers as Determinants of Certain Special Matrices}\label{dfibo}
Fibonacci numbers are related to special forms of matrices like Hessenberg matrices, triangular matries, tridiagonal matrices etc. via their determinants, sum of the entries etc. An $n \times n$ matrix $A=(a_{ij})$ is an upper(lower) triangular matrix if $a_{ij}=0$ when $i>j$ ($j>i$) and upper(lower) Hessenberg matrix if $a_{ij}=0$ when $i+1>j$ ($j+1>i$). Ching showed that for the collection of all $n\times n$ lower Hessenberg matrices with entries $0$ and $1$, the maximum determinant is $F_n$ \cite{Li}. Strang \cite{Strang} presents a family of tridiagonal matrices given by
	\begin{center}
	$M(n)=\left(\begin{array}{ccccc}
	3 & 1 & 0 & \cdots & 0\\
	1 & 3 & 1 & \cdots & 0\\
	0 & 1 & 3 & \ddots & 0\\
	\vdots & \vdots & \ddots & \ddots & \vdots\\
	0 & 0 & 0 & \cdots & 3
	\end{array} \right)_{n\times n}.$
	\end{center}
It is shown by induction that the determinant of $M(n)$ is the Fibonacci number $F_{2n+2}$. Another example is the family of tridiagonal matrices given by
	\begin{center}
	$H(n)=\left(\begin{array}{ccccc}
	1 & i & 0 & \cdots & 0\\
	i & 1 & i & \cdots & 0\\
	0 & i & 1 & \ddots & 0\\
	\vdots & \vdots & \ddots & \ddots & \vdots\\
	0 & 0 & 0 & \cdots & 1
	\end{array} \right)_{n\times n}$
	\end{center}
where $i$ is the imaginary unit, $i=\sqrt{-1}$. Cahill, Errico, Narayan and Narayan showed that the determinant of $H(n)$ is $F_{n+1}$ \cite{College}. In that work, the authors presented a recurence relation for the determinants of a general lower Hessenberg matrix and derived various other examples of tridiagonal matrices whose determinants are Lucas numbers, odd Fibonacci numbers $F_1, F_3, F_5, \cdots$ and even Fibonacci numbers $F_2, F_4, F_6, \cdots$. If the off-diagonal entries of $H(n)$ are replaced by $1$ (above diagonal) and $-1$ (below diagonal) then it has been shown that the determinant remains the same ($F_{n+1}$) \cite{Strang}. Cahill and Narayan \cite{Naru}, defined a  symmetric family of tridiagonal matrices $M_{\alpha,\beta}(k)$ and $T_{\alpha,\beta}(k)$ with $\alpha$, $\beta$ being positive integers, where $k$ is the order of the matrices. They showed that $det(M_{\alpha, \beta}(k))=F_{\alpha k+\beta}$ and $det(T_{\alpha, \beta}(k))=L_{\alpha k+\beta},$ i.e., the determinants form subsequences of Fibonacci and Lucas numbers. For related results we refer to the works \cite{Nalli}, \cite{Pavel} and \cite{Feng}.

\subsection{Some Recent Results and their Extensions}	
Now, let us recall some recent results. Let $S(X)$ denote the sum of the entries of a matrix $X$. Huang, Tam and Wu \cite{Huang} showed that a number $s$ is equal to $S(A^{-1})$ for an adjacency matrix $A$ (a symmetric $(0,1)$ matrix with trace zero) if, and only if, $s$ is rational. Motivated by this work, Farber and Berman \cite{Far} presented a nice connection between Fibonacci numbers and matrix theory, thereby providing a partial answer to the question "what can be said about the sum of the entries of the inverse of a $(0,1)$ matrix?". Their result may be stated as: A number $s$ is the sum of the entries of the inverse of an $n \times n$ upper triangular matrix $(n \geq 3)$ with entries from the set $ \lbrace 0, 1 \rbrace$ if, and only if, $s$ is an integer between $2-F_{n-1}$ and $2+F_{n-1}$.
	
An extension of this result for group invertible matrices is presently being investigated by the first and third authors \cite{mankcs}. Let us present a brief account of such a generalization. The details will appear elsewhere. For every $n\geq 6$, let $C_1,C_2, C_3$ and $C_4$ be nonsingular matrices of order $n-1$ whose entries are from the set $\{0,1\}$. Let $p_n=S(C_1^{-2}u^n), r_n=S(C_2^{-2}v^n), q_n=-S(C_3^{-2}w^n)$ and $s_n=-S(C_4^{-2}z^n)$, where $u^n, v^n, w^n$ and $z^n$ are vectors with $n-1$ coordinates, whose entries are either $0$ or $1$. It is shown that for certain specific choices of these vectors, the numbers $p_n ,q_n, r_n$ and $s_n$ are nonnegative integers. Then the following result is shown: 

\begin{theorem}\label{grinvfib}
Let $s$ be an integer satisfying either:
\begin{eqnarray}
2-F_{n-2}-q_n \leq s \leq 2+F_{n-2}+p_n,  \nonumber
\end{eqnarray}
or
\begin{eqnarray}
2-F_{n-2}-s_n \leq s \leq 2+F_{n-2}+r_n.  \nonumber
\end{eqnarray}
Then there exists an upper triangular, $\lbrace0,1\rbrace$, group invertible, singular matrix $A$ such that $S(A^\#)=s$.
\end{theorem}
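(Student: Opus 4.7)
The plan is to construct $A$ as a $2\times 2$ block upper triangular matrix with a singular diagonal block, and to derive an explicit formula for its group inverse. Concretely, I would consider
\[
A = \begin{pmatrix} C & v \\ 0 & 0 \end{pmatrix},
\]
where $C$ is an $(n-1)\times(n-1)$ nonsingular upper triangular $\{0,1\}$ matrix and $v$ is an $(n-1)$-dimensional $\{0,1\}$ column vector. Since $\operatorname{rank}(A)=\operatorname{rank}(A^{2})=n-1$, the group inverse $A^{\#}$ exists. A direct check of the three defining identities $AA^{\#}A=A$, $A^{\#}AA^{\#}=A^{\#}$, $AA^{\#}=A^{\#}A$ yields
\[
A^{\#} = \begin{pmatrix} C^{-1} & C^{-2}v \\ 0 & 0 \end{pmatrix},
\]
so that $S(A^{\#})=S(C^{-1})+S(C^{-2}v)$.

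Next, I would invoke the Farber--Berman result on $(n-1)\times(n-1)$ upper triangular $\{0,1\}$ matrices (applicable because $n\geq 6$ forces $n-1\geq 5$): the quantity $S(C^{-1})$ attains every integer in $[2-F_{n-2},\,2+F_{n-2}]$ as $C$ varies. For the upper endpoint of the first range, take $C=C_{1}$ and $v=u^{n}$, combining $S(C_{1}^{-1})=2+F_{n-2}$ with the auxiliary contribution $S(C_{1}^{-2}u^{n})=p_{n}$; for the lower endpoint, take $C=C_{3}$, $v=w^{n}$, so that $S(C_{3}^{-2}w^{n})=-q_{n}$ shifts the Farber--Berman minimum down by $q_{n}$. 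The second range is obtained by a parallel construction that instead places the zero row at the top,
\[
A=\begin{pmatrix} 0 & w^{T} \\ 0 & C \end{pmatrix}, \qquad A^{\#}=\begin{pmatrix} 0 & w^{T}C^{-2} \\ 0 & C^{-1} \end{pmatrix},
\]
using the pairs $(C_{2},v^{n})$ and $(C_{4},z^{n})$ at its two endpoints.

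For intermediate integer values of $s$, I would carry out an interpolation argument: start from the extremal matrix that realizes one endpoint and successively flip single $\{0,1\}$ entries of $C$ (and occasionally of the appended vector), along a path on which $C$ remains nonsingular and upper triangular, so that $S(A^{\#})$ changes by exactly one at each step. This can be organized around the explicit Fibonacci-patterned extremal families that Farber and Berman use to sweep out $S(C^{-1})$ across the full interval $[2-F_{n-2},\,2+F_{n-2}]$.

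The main obstacle is precisely this interpolation. The Farber--Berman theorem guarantees surjectivity of $C\mapsto S(C^{-1})$ onto its integer interval, but here one must simultaneously control $S(C^{-2}v)$, which changes with $C$ in a nontrivial way. The technical heart of the proof is therefore a careful combinatorial bookkeeping that tracks how a single entrywise perturbation of $C$ propagates through $C^{-1}$ and $C^{-2}v$, ensuring the composite sum $S(C^{-1})+S(C^{-2}v)$ (or its analogue in the transposed construction) hits every integer in the extended intervals $[2-F_{n-2}-q_{n},\,2+F_{n-2}+p_{n}]$ and $[2-F_{n-2}-s_{n},\,2+F_{n-2}+r_{n}]$ without gaps.
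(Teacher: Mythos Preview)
The paper does not actually prove this theorem: it is announced as work in progress by the first and third authors, with the explicit caveat ``The details will appear elsewhere'' and a reference to the manuscript \cite{mankcs}. So there is no proof in the paper to compare against. What the paper does provide is the set-up --- specific nonsingular $(n-1)\times(n-1)$ $\{0,1\}$ matrices $C_1,\dots,C_4$ and vectors $u^n,v^n,w^n,z^n$ with $p_n=S(C_1^{-2}u^n)$, $q_n=-S(C_3^{-2}w^n)$, etc.\ --- and this set-up is entirely consistent with your block construction. Your formula
\[
A=\begin{pmatrix} C & v\\ 0 & 0\end{pmatrix},\qquad A^{\#}=\begin{pmatrix} C^{-1} & C^{-2}v\\ 0 & 0\end{pmatrix},\qquad S(A^{\#})=S(C^{-1})+S(C^{-2}v)
\]
is correct and is almost certainly the intended mechanism, since it explains exactly why quantities of the form $S(C_i^{-2}\,\cdot\,)$ appear in the statement.

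That said, your proposal is a sketch, not a proof, and you have accurately located the gap yourself. The Farber--Berman theorem lets $S(C^{-1})$ sweep the interval $[2-F_{n-2},\,2+F_{n-2}]$, but you need $S(C^{-1})+S(C^{-2}v)$ to sweep the enlarged interval, and the second summand moves with $C$. Your plan to flip single entries along a path and argue the total changes by $\pm 1$ at each step is plausible but unsubstantiated: a single entry change in $C$ can alter many entries of $C^{-1}$ and $C^{-2}v$ simultaneously, and there is no a priori reason the net effect is $\pm 1$. Absent the explicit Fibonacci-patterned families (which the paper does not display either) and a verification that the combined sum is monotone along them, this step is genuinely missing. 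A smaller point: in your second construction $S(A^{\#})=S(C^{-1})+S(w^{T}C^{-2})=S(C^{-1})+S\bigl((C^{T})^{-2}w\bigr)$, which is not literally $S(C^{-2}w)$; the paper's pairs $(C_2,v^n)$, $(C_4,z^n)$ presumably absorb this transpose, but you should align the notation so that $r_n$ and $s_n$ match the quantities your construction actually produces.
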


\begin{remarks}
Examples exist to show that the converse of Theorem \ref{grinvfib} does not hold. 
\end{remarks}

\section{Fibonacci and other Relevant Number Sequences in Quantum Tomography}\label{fibqit}
In this section, we shall give an exposition on the relationships between Fibonacci sequences and Quantum Tomography. While our discussion in the earlier part of this survey concerned mainly matrices with real entries (with some exceptions, viz., in the case of circulant matrices) here, we are interested in vectors and matrices in the set-up of finite dimensional linear spaces mostly over the field $\mathbb{C}$ of complex numbers. We shall make no attempt at the history or the development of the topic under consideration. On the other hand, our objective is to create a feel for Quantum Tomography, in as simple a way as one possibly could. We refer to essentially general articles like \cite{FuchHuang}, \cite{FuchStacy}, and references therein, particularly the originators \cite{Zauner}, \cite{Renes}, \cite{Lem}. We mainly draw upon \cite{Bengtsson}, \cite{ScottGrassl}, \cite{NewaddedApple}, \cite{Applebytower} and \cite{Applebyray} and use them freely in the form suited for our purpose.

\subsection{The Two Dimensional Case as a Motivation} \label{twodim}
We begin with the usual inner product space $\mathbb{C}^2$ with the standard basis consisting of $\vert 0 \rangle =e_0=\left( \begin{array}{c}
1 \\
0
\end{array} \right)$ and  $\vert 1 \rangle =e_1=\left( \begin{array}{c}
0 \\
1
\end{array} \right)$.

The space of linear operators on $\mathbb{C}^2$ to itself can be identified with that of $2 \times 2$ complex matrices, viz., $M_2(\mathbb{C})$, or in short, $M_2$; $M_2$ can be made into an inner product space via $\langle T, S \rangle = $ trace of $T^*S$, in short, $tr(T^*S)$. Here $T^*$ denotes the adjoint of $T$ and we follow the convention in Physics or in Quantum Information Theory, that the inner product is linear in the second variable but conjugate-linear in the first variable.

The matrices 
\begin{center}
$U_0=P=\left( \begin{array}{cc}
1 & 0\\
0 & 1
\end{array} \right),$ (the identity matrix)
\end{center}
\begin{center}
$U_1=A=\left( \begin{array}{cc}
1 & 0\\
0 & -1
\end{array} \right)$,
\end{center}
\begin{center}
$U_2=U=\left( \begin{array}{cc}
0 & 1\\
1 & 0
\end{array} \right)$
\end{center}
and 
\begin{center}
$U_3=L=\left( \begin{array}{cc}
0 & -i\\
i & 0
\end{array} \right)$ 
\end{center}
are hermitian and unitary. Further, $AU=iL,~UL=iA$ and $LA=iU$. The matrices $P, A, U, L$ are called Pauli matrices in honour of W.E. Pauli (1900-1958) who was awarded the Nobel prize in physics in 1945. Observe that $\lbrace \frac{1}{\sqrt{2}}U_j,~j=0,1,2,3\rbrace$ is an orthonormal basis for $M_2$, due to the fact that $tr(U_j^*U_k)=2\delta_{jk}$ for $0 \leq j, k \leq 3$. Also, any matrix $T\in M_2$ has the form
\begin{center}
$T=pP+aA+uU+lL=\left( \begin{array}{cc}
p+a & u-li\\
il+u & -a+p
\end{array} \right),$
\end{center}
with $p, a, u, l \in \mathbb{C}$. We note that $T$ is hermitian if, and only if, $p, a, u, l$ are all real. Further in this case, $T$ is positive semi-definite if, and only if, $p\geq 0$ and $p^2 \geq a^2+u^2+l^2$. It is for this reason that the set of all positive semi-definite (in short, positive) matrices is referred to as a {\it cone}. The numbers $p, a, u, l$ are simply $\frac{1}{2}tr(TP),~\frac{1}{2}tr(TA), ~\frac{1}{2} tr(TU)$ and $\frac{1}{2}tr(TL),$ respectively.

Let $\xi$ be a unit vector in $\mathbb{C}^2$. In what follows, we let $P_\xi$ denote the rank-one projection on $\mathbb{C}^2$ given by $P_\xi(x)=\langle \xi, x\rangle \xi$, $x \in \mathbb{C}^2$. Then, for $T \in M_2$, $tr(TP_\xi)=\langle \xi,T \xi \rangle$. So, we look for a basis for $M_2$ consisting of projections like $P_\xi$, viz., (self-adjoint) rank-one projections.

\begin{example}\label{anotherbasis}
One instance for such a sought after basis is provided by $\xi^{(0)}=\vert 0 \rangle=\left( \begin{array}{c}
1 \\
0
\end{array} \right)$, $\xi^{(1)}=\vert 1 \rangle=\left( \begin{array}{c}
0 \\
1
\end{array} \right)$, both unit eigenvectors for $A$, $\xi^{(2)}=\frac{1}{\sqrt{2}}(\vert 0 \rangle +\vert 1 \rangle)=\frac{1}{\sqrt{2}}\left( \begin{array}{c}
1 \\
1
\end{array} \right)$ and $\xi^{(3)}=\frac{1}{\sqrt{2}}(\vert 0 \rangle +i\vert 1 \rangle)=\frac{1}{\sqrt{2}}\left( \begin{array}{c}
1 \\
i
\end{array} \right)$ which are eigenvectors for $U$ and $L,$ respectively for the eigenvalue $1$; and then consider 
$\lbrace P_{\xi^{(j)}}~:~0\leq j \leq 3\rbrace $. This works fine due to the reason that 
\begin{center}
$U_0=P=I=P_{\xi^{(0)}}+P_{\xi^{(1)}}$,                                                                                                                   
\end{center}
\begin{center}
$U_1=A=P_{\xi^{(0)}}-P_{\xi^{(1)}}$,
\end{center}
\begin{center}
$U_2=U=P_{\xi^{(2)}}-(P_{\xi^{(0)}}+P_{\xi^{(1)}}-P_{\xi^{(2)}})$,
\end{center}
\begin{center}
$U_3=L=P_{\xi^{(3)}}-(P_{\xi^{(0)}}+P_{\xi^{(1)}}-P_{\xi^{(3)}})$
\end{center}
and the earlier proved fact that $\lbrace U_j:0\leq j \leq 3\rbrace$ is a basis for $M_2$.
\end{example}

\begin{remarks}\label{rankoneproj}
The following observation will be useful in the sequel. For unit vectors $\xi,~\eta$ in $\mathbb{C}^2$, we take $T=P_\eta$ and obtain $tr(P_\eta P_\xi)=\vert\langle \xi, \eta \rangle \vert ^2$; in other words, $\langle P_\xi, P_\eta \rangle =\vert\langle \xi, \eta \rangle \vert ^2$.
\end{remarks}

For any subset $S$, let $\left\vert{S}\right\vert$ denote the cardinality of the set $S$.

\begin{definition}\label{defequiortho}
Let $S$ be a subset of $\mathbb{C}^2$ consisting of unit vectors satisfying the following conditions:\\
$(a)~ \left\vert{S}\right\vert \geq 3.$\\
$(b)$ For $\xi$, $\eta \in S$, one has $\xi=\lambda \eta$ for some $\lambda \in \mathbb{C}$ if and only if $\xi=\eta$ i.e., if $T \subset S$ with $\left\vert{T}\right\vert=2$, then $T$ is linearly independent. \\
$(i)$ Then $S$ will be called an {\it admissible set}.\\
$(ii)~S$ will be called {\it equiangular} if, and only if, $\lbrace \vert\langle \xi, \eta \rangle\vert~ :~\xi, \eta \in S, \xi \neq \eta \rbrace$ is a singleton, say $\{a\}$. 
\end{definition}

In this case, in view of condition $(b)$, $a\neq 1$ and in view of condition $(a)$ and Remark \ref{rankoneproj}, one has $a\neq 0$. This yields $0<a_s=cos^{-1}a<\frac{\pi}{2}$ and we call $a_s$ the {\it common angle} for $S$.

\begin{remarks}\label{admissrem}
Let $S$ be an admissible set.\\
(a) For any function $f:S\rightarrow [0, 2\pi)$, the subset of $\mathbb{C}^2$ given by $S^f=\lbrace e^{if(\xi)}\xi :~\xi \in S\rbrace,$ is admissible. Also, $\vert S \vert= \vert S^f \vert$. \\
(b)$S$ is equiangular if, and only if, $S^f$ is equiangular. In this case, the common angle is the same for both $S$ and $S^f$. We treat all such sets $S^f$ to be equivalent.
\end{remarks}

There are uncountably many orthonormal basis in $\mathbb{C}^2$. On the other hand, any non-empty orthonormal set in $\mathbb{C}^2$ has cardinality $1$ or $2$. So, by Remark \ref{rankoneproj}, any non-empty orthonormal set in $ M_2$ consisting of projections like $P_{\xi}$ has cardinality $1$ or $2$. So, it cannot be a basis for $M_2$, which has dimension $4$.

\begin{remarks} \label{3.5}
The basis given in Example \ref{anotherbasis} for $M_2$ can be referred to as {\it mixed type}, due to the fact that \\
$(a)~\langle P_{\xi^{(0)}}, P_{\xi^{(1)}}\rangle = 0$ \\
$(b)~\langle P_{\xi^{(j)}}, P_{\xi^{(k)}}\rangle = \frac{1}{2}$ for $j=0$ or $1$, $k=2$ or $3$\\
and \\
$(c)~\langle P_{\xi^{(2)}}, P_{\xi^{(3)}}\rangle = \frac{1}{2}.$
\end{remarks}

\begin{example}\label{twoequiangular}
Let $\lbrace \xi^{(j)},~0\leq j\leq 3\rbrace$ be as in Example \ref{anotherbasis}. Then one may verify that $S_1=\lbrace \xi^{(0)}, \xi^{(2)}, \xi^{(3)}\rbrace$ and $S_2=\lbrace \xi^{(1)}, \xi^{(2)}, \xi^{(3)}\rbrace$ are both equiangular in $\mathbb{C}^2$ with common angle $\frac{\pi}{4}$.
\end{example}

\begin{example}\label{lookforxi}
Let $\xi=\left( \begin{array}{c}
\alpha \\
\beta
\end{array} \right) \in \mathbb{C}^2$ with $\vert \alpha \vert^2+\vert \beta \vert ^2=1$. Then
\begin{eqnarray}
S(\xi)&=& \lbrace U_j \xi : 0\leq j\leq 3\rbrace \nonumber\\
&=&\Bigg \{ \left( \begin{array}{c}
\alpha \\
\beta
\end{array} \right), \left( \begin{array}{c}
\alpha \\
-\beta
\end{array} \right), \left( \begin{array}{c}
\beta \\
\alpha
\end{array} \right), \left( \begin{array}{c}
-i\beta \\
i\alpha
\end{array} \right) \Bigg \}.\nonumber
\end{eqnarray}

We look for $\xi$ for which $S(\xi)$ is an equiangular set (having four elements) in the sense of item (b) of Remark  \ref{admissrem}. This immediately rules out the cases $\beta=0, ~\alpha=0, ~\alpha=\beta$ and $\alpha=-\beta$. Further, it is enough to consider the case $\alpha>0$. So, to begin with, we may take $\alpha=cos \theta, ~\beta=sin \theta e^{i\phi}, ~0<\theta <\frac{\pi}{2},~-\pi <\phi \leq \pi$ and for $\theta=\frac{\pi}{4},~0 \neq \phi \neq \pi$.

We list different $\vert \langle x, y \rangle\vert$ for $x\neq y \in S(\xi)$: 
\begin{eqnarray}
\vert (\vert \alpha \vert^2-\vert \beta \vert ^2)\vert&=&\vert cos^2 \theta -sin^2 \theta \vert=\vert cos 2\theta \vert,\nonumber\\
\vert \overline{\alpha} \beta +\overline{\beta} \alpha \vert &=& \vert sin 2\theta cos \phi \vert,\nonumber
\end{eqnarray}
and 
\begin{eqnarray}
\vert \overline{\alpha} \beta -\overline{\beta }\alpha \vert &=& \vert sin 2\theta sin \phi \vert .\nonumber
\end{eqnarray}

So, $S(\xi)$ is equiangular if, and only if, 
\begin{center}
$\vert cos 2\theta \vert =\vert sin 2\theta cos \phi \vert =\vert sin 2\theta sin \phi \vert \neq 1,$
\end{center}
which holds if, and only if, $\phi$ is an odd multiple of $\frac{\pi}{4}$ and 
\begin{center}
$\vert cos 2\theta \vert =\frac{1}{\sqrt{2}}sin 2\theta,$
\end{center}
which in turn, holds if, and only if, 
\begin{center}
$\vert cos 2\theta \vert =\frac{1}{\sqrt{3}}$ and  $\phi =\pm \frac{\pi}{4}, \pm \frac{3\pi}{4},$
\end{center}
which holds if, and only if, both 
\begin{center}
$e^{i\phi}=\pm e^{\pm \frac{i\pi}{4}}$
\end{center}
and either 
\begin{center}
$(cos \theta, sin \theta )=(\sqrt{\frac{1}{2}(1+\frac{1}{\sqrt{3}})}, \sqrt{\frac{1}{2}(1-\frac{1}{\sqrt{3}})} ),$
\end{center}
or 
\begin{center}
$(cos \theta, sin \theta )=(\sqrt{\frac{1}{2}(1-\frac{1}{\sqrt{3}})}, \sqrt{\frac{1}{2}(1+\frac{1}{\sqrt{3}})} ).$
\end{center}

\noindent This gives eight possible solutions for $\xi=\left( \begin{array}{c}
\alpha \\
\beta
\end{array} \right)$ with $\alpha > 0$. However, the corresponding sets $S(\xi)$ are equivalent for some of them and essentially there are only two well-known solutions $\xi=\frac{1}{\sqrt{6}}\left( \begin{array}{c}
\sqrt{3+\sqrt{3}} \\
e^{\frac{i\pi}{4}}\sqrt{3-\sqrt{3}}
\end{array} \right)$ and $\xi=\frac{1}{\sqrt{6}}\left( \begin{array}{c}
-\sqrt{3-\sqrt{3}} \\
e^{\frac{i\pi}{4}}\sqrt{3+\sqrt{3}}
\end{array} \right)$.
\end{example}

We now summarize the findings of the discussion above.

\begin{theorem}\label{summth}
$(a)$ Let $S$ be a subset of unit vectors in $\mathbb{C}^2$ with $\left\vert{S}\right\vert >2$. Then $S$ is equiangular in $\mathbb{C}^2$ if, and only if, $\lbrace P_\xi:\xi \in S\rbrace$ is equiangular in $M_2$. In this case,  $\lbrace P_\xi:\xi \in S\rbrace$ is linearly independent in $M_2$.\\
$(b)~M_2$ has an equiangular basis consisting of rank-one projections whose sum is $2I$.
\end{theorem}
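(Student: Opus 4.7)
The plan for part (a) is to exploit Remark \ref{rankoneproj} directly. Since $\langle P_\xi, P_\eta\rangle = |\langle \xi,\eta\rangle|^2$ is already a non-negative real number, passing from unit vectors $\xi,\eta$ to the rank-one projections $P_\xi, P_\eta$ simply squares the magnitude of the inner product. Hence $\{|\langle \xi,\eta\rangle| : \xi \neq \eta \in S\}$ is a singleton $\{a\}$ if and only if $\{|\langle P_\xi, P_\eta\rangle| : \xi \neq \eta \in S\}$ is the singleton $\{a^2\}$, and this gives both directions of the equivalence.

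For the linear independence assertion in part (a), I would write down the Gram matrix $G$ of $\{P_\xi : \xi \in S\}$ in $M_2$. Each $P_\xi$ is a unit vector in $M_2$ (since $\mathrm{tr}(P_\xi^2)=1$), and the pairwise inner products all equal $a^2$, so $G = (1-a^2)I + a^2 J$, where $J$ is the all-ones matrix of size $|S|$. The observation following Definition \ref{defequiortho} yields $0 < a < 1$, so the two eigenvalues of $G$, namely $1-a^2$ and $1 + (|S|-1)a^2$, are strictly positive; hence $G$ is positive definite, forcing $\{P_\xi : \xi \in S\}$ to be linearly independent in $M_2$.

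For part (b), the idea is to lift one of the concrete solutions found in Example \ref{lookforxi}. Take, for instance, $\xi = \frac{1}{\sqrt{6}}(\sqrt{3+\sqrt{3}},\; e^{i\pi/4}\sqrt{3-\sqrt{3}})^T$, so that $S(\xi) = \{U_j \xi : 0 \leq j \leq 3\}$ is an equiangular set of four distinct unit vectors. Part (a) then tells us that $\{P_{U_j\xi} : 0 \leq j \leq 3\}$ is an equiangular, linearly independent family of rank-one projections in $M_2$, and since its cardinality matches $\dim M_2 = 4$, it is in fact a basis.

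The only remaining point is the identity $\sum_{j=0}^{3} P_{U_j\xi} = 2I$. Using the Hermiticity of the Pauli matrices, $P_{U_j\xi} = U_j P_\xi U_j^* = U_j P_\xi U_j$, so I would invoke the standard Pauli twirling identity $\sum_{j=0}^{3} U_j T U_j = 2\,\mathrm{tr}(T)\,I$ valid for every $T \in M_2$. This in turn follows from a short calculation: the non-identity Paulis pairwise anticommute, so $\sum_j U_j U_k U_j = 0$ for $k \geq 1$, while on $U_0 = I$ the same sum equals $4I$; combining this with the decomposition $T = \tfrac{1}{2}\mathrm{tr}(T)\,I + T_0$ into its identity and traceless parts yields the formula. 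Applying it with $T = P_\xi$, where $\mathrm{tr}(P_\xi)=1$, delivers the required $2I$. The only mildly delicate step in the whole argument is this last twirling verification; everything else is structural, handed to us by Example \ref{lookforxi} and Remark \ref{rankoneproj}.
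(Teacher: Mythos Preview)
Your argument is correct. The equivalence in (a) and the existence part of (b) match the paper's proof, which simply invokes Remark \ref{rankoneproj}, Remark \ref{admissrem}, and Example \ref{lookforxi}.

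Where you diverge is in the linear independence step of (a). The paper argues by contradiction: assuming a nontrivial relation $\sum_{\xi\in S_1}\alpha_\xi P_\xi=0$, it pairs with each $P_\eta$ to get $\alpha_\eta + a\sum_{\xi\neq\eta}\alpha_\xi=0$, subtracts two such equations to force all $\alpha_\eta$ equal, and then observes $1+(|S_1|-1)a\neq 0$. Your Gram--matrix route computes the eigenvalues $1-a^2$ and $1+(|S|-1)a^2$ of $(1-a^2)I+a^2J$ directly and reads off positive definiteness. The two arguments are essentially dual---yours diagonalises the Gram matrix, the paper's shows its kernel is trivial by hand---but your version is shorter and gives the eigenvalues explicitly, which is extra information. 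The paper's version has the minor advantage of avoiding any appeal to spectral theory.

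For (b), you actually supply more than the paper does: the paper's proof is the single line ``follows by combining (a) and Example \ref{lookforxi}'', leaving the identity $\sum_j P_{U_j\xi}=2I$ implicit. Your Pauli twirling computation $\sum_{j=0}^3 U_j T U_j = 2\,\mathrm{tr}(T)\,I$ is a clean, self-contained way to certify that the four projections really do sum to $2I$, and it closes a gap the paper leaves to the reader.
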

\begin{proof}
$(a)$ For the first part we have only to use Remark \ref{rankoneproj} and Remark \ref{admissrem}. For the second part, let $a\neq 1$ be the common value of $\lbrace \vert\langle \xi, \eta\rangle\vert^2 : \xi \neq \eta \in S \rbrace$. Let, if possible $\lbrace P_\xi : \xi \in S \rbrace$ be not linearly independent. Then there exists a finite non-empty subset $S_1$ of $S$ and a non-zero tuple $(\alpha_{\xi})_{\xi \in S_1}$ of scalars such that
$\sum_{\xi \in S_1}\alpha_{\xi}P_{\xi} =0$. Then for $\eta \in S_1$, one has: 
\begin{eqnarray}
 0&=&\sum_{\xi \in S_1}\alpha_{\xi}\langle P_{\xi}, P_{\eta}\rangle\nonumber\\
 &=&\alpha_{\eta}+\sum_{\eta \neq \xi \in S_1} \alpha_{\xi} a\nonumber\\
 &=&\alpha_{\eta}+a\sum_{\eta \neq \xi \in S_1} \alpha_{\xi}.\nonumber
 \end{eqnarray}
So, for $\eta_1 \neq \eta_2$ in $S_1$, one has:
 \begin{eqnarray}
 \alpha_{\eta_1}+a\alpha_{\eta_2}+a\sum_{\eta_1 \neq \xi \neq \eta_2, \xi \in S_1} \alpha_{\xi}=\alpha_{\eta_2}+a\alpha_{\eta_1}+a\sum_{\eta_1 \neq \xi \neq \eta_2, \xi \in S_1} \alpha_{\xi}.\nonumber
 \end{eqnarray}
This gives $(1-a)(\alpha_{\eta_1}-\alpha_{\eta_2})=0$, so that $\alpha_{\eta_1}=\alpha_{\eta_2}$. Thus, for any point $\eta \in S_1$, one has $\alpha_{\eta}(1+(\left\vert{S_1}\right\vert -1)a)=0$, so that $\alpha_{\eta}=0$ for $\eta \in S_1$, a contradiction. \\
(b) Follows by combining $(a)$ and Example \ref{lookforxi} above.
\end{proof}

Now the ground is set for the general concept of symmetric informationally complete positive operator valued measures, in short, SIC-POVM, or even SIC.

\subsection{Higher dimensions}\label{highdim}
Let $d\geq 2$ and $H=\mathbb{C}^d$, the usual inner product space with the standard basis of vectors $\vert j\rangle =e_j$, the tuple with $1$ at $j$th place and zero elsewhere, $j=0, 1, \cdots, d-1$. We look for analogues for general $d$ of different items in subsection \ref{twodim} above in a suitable order.

As before, we may consider $M_d(\mathbb{C})$, in short, $M_d$ as an innner product space with the inner product $\langle T, S \rangle=tr(T^*S)$ for $T$, $S$ in $M_d$. We note that the norm $\Vert . \Vert_2$ in $M_d$ is simply the Euclidean norm when $M_d$ is identified with $\mathbb{C}^{d^2}$, and it is the form for the inner product in terms of trace that turns out to be useful.

Definition of projections can be repeated verbatim for a unit vector $\xi$ and the same applies to expression for $tr(TP_{\xi})$ for an operator $T$ on $C^d$ to itself. Furthermore, an exact analogue of Remark \ref{rankoneproj} poses no problem to go over from $\mathbb{C}^2$ to $\mathbb{C}^d$. We observe that Definition \ref{defequiortho} and Remark \ref{admissrem} already hold for inner product spaces of higher dimensions, say $d$ and to avoid trivialities we take $\vert S \vert >d$. Also for the next paragraph in \ref{twodim} changing $\mathbb{C}^2$ to $\mathbb{C}^d$, $M_2$ to $M_d$ and $4=2^2$ to $d^2$ poses no problem at all. 

\subsubsection{Weyl-Heisenberg group or Schwinger basis}\label{whs}
Let $w_d=exp(\frac{2\pi i}{d})$ and $\tau_d=-exp(\frac{\pi i}{d})$. Let $X$ and $Z$ be operators on $H$ (introduced by Hermann Weyl in 1925) given by $X\vert j\rangle=\vert j+1\rangle$, $Z\vert j\rangle=w_d^j \vert j \rangle $ for $j\in \mathbb{Z}_d=\lbrace 0, 1, \cdots, d-1\rbrace$, the ring of integers with addition and multiplication modulo $d$. Then for $j, k \in \mathbb{Z}_d$, we note that $X^j Z^k=(w_d)^{-jk}Z^kX^j.$ Now, define the {\it Weyl-Heisenberg displacement operator} by $D_{jk}=(\tau_d)^{jk}X^jZ^k$. 

Then $D_{00}=I_H$ (the identity operator on $H$) and all $D_{jk}$'s are unitary operators with $tr(D_{jk})=0$ for $(j, k)\neq (0, 0)$. The product of two displacement operators is, up to a phase factor a third, in the sense that 
\begin{equation}
D_{jk}D_{\alpha \beta}=(\tau_d)^{k\alpha-\beta j}D_{j+\alpha, k+\beta}.\nonumber
\end{equation}

Thus, by allowing the generators to be multiplied by phase factors, we may ``define'' the so-called Weyl-Heisenberg group $\lbrace D_{jk} : j, k \in \mathbb{Z}_d \rbrace$. Next, $D_{jk}$ is a scalar multiple of $D_{dj, dk}$ as is clear from the definition for $j, k \in \mathbb{Z}_d$. So, in view of the product formula, one has  $tr(D_{jk}^*D_{\alpha \beta})=0$ for $(j, k)\neq (\alpha, \beta)$. This makes $\lbrace D_{jk}: j, k \in \mathbb{Z}_d\rbrace$ a linearly independent set of cardinality $d^2$ in $M_d$.

Note that for $d=2$, one has $D_{00}=I=P$, $D_{01}=A$, $D_{10}=U$, $D_{11}=L$.

 Once again, we summarize our observations below.

\begin{remarks}\label{sumhd}
$(a)$ An appropriate analogue for Example \ref{anotherbasis} appears to be that of the well-known {\it mutually unbiased bases} which exist, to begin with, when $d$ is prime. The unitary basis $\lbrace D_{jk}: j, k \in \mathbb{Z}_d\rbrace$ given as above can be expressed as a union of $d+1$ subsets say ${\cal U}_s$, $1\leq s \leq d+1$ with the property that the members of each ${\cal U}_s$ commute with each other. Furthermore, for $1\leq s \leq d+1$, we can choose a common orthonormal basis, say, $\lbrace \xi^{(s)}_j : 0\leq j \leq d-1 \rbrace $ for members of ${\cal U}_s$ that also satisfy $\vert\langle \xi^s_j, \xi^t_k \rangle\vert=\frac{1}{\sqrt{d}}$ for $s\neq t$, $j, k \in \mathbb{Z}_d$. The set 
\begin{center}
$\lbrace \xi^{(1)}_j : j \in \mathbb{Z}_d \rbrace \cup \lbrace \xi_j^s : 0 \neq j \in \mathbb{Z}_d, 2 \leq s \leq d+1 \rbrace$ 
\end{center}
works fine simply because 
\begin{center}
$P_{\xi_0^s} =I_d-\sum_{j=1}^{d-1} P_{\xi _j^s},~2 \leq s \leq d+1$
\end{center}
and for $1 \leq s \leq d+1$, each $U$ in ${\cal U}_s$ is of the form $\sum_{j=0}^{d-1} \alpha_j ^UP_{\xi^s_j}$ for eigenvalues $\alpha_j^U$ of $U$ with eigenvector $\xi_j^s$, $0 \leq j \leq d-1$.

$(b)$ For unitary bases ${\cal U}$ as in $(a)$, for the case of composite $d$'s, there is no guarantee for mutually unbiased bases. However, maximal commuting subsets of ${\cal U}$ can be combined and common orthonormal bases can be found for them. The number of rank one projections is much more than $d^2,$ in general. Various methods to reduce this number were given by a few authors including Chaturvedi, Mukunda and Simon \cite{cms}, and Shalaby and Vourdas \cite{sv1, sv2}. The second author in a joint work with Chaturvedi, Ghosh and Parthasarathy managed to reduce the number further by their method of optimal quantum tomography with constrained elementary measurements, in general and to $d^2$ in case $d=p^2$ or certain products of two distinct primes $p$ and $a$. The work is an augmented and refined combination of \cite{gs} and \cite{cgps}.

$(c)$ For $d$, a power of prime, there exist mutually unbiased bases for $\mathbb{C}^d$, where $\mathbb{C}^d$ is expressed as a tensor product and thus has another inner product and new unitary bases. Attempts have been made to obtain sets of projections of rank one (that will suffice for instance, in quantum tomography) for a general $d$ too, where $\mathbb{C}^d$ is endowed with new inner products and new unitary bases. 
\end{remarks}

 We now write an analogue of Remark \ref{3.5} based on $(a)$ of Remark \ref{sumhd}.

\begin{remarks}
Let $\lbrace P_{\xi_j^{(s)}} : 1 \leq s \leq d+1,~ 0 \leq j \leq d-1 \rbrace$ be as in Remark \ref{sumhd} $(a)$ above. Then, 
\begin{center}
$\langle P_{\xi_j^{(s)}},P_{\xi_k^{(t)}} \rangle=  \bigg \{ \begin{array}{ll}
0, &  for~s= t, j\neq k \\ 		
\frac{1}{d}, & for~ s \neq t
\end{array}.$ 
\end{center}
\end{remarks}
Finally, the proof of $(a)$ of Theorem \ref{summth} can be adapted to extend it to general $d \geq 2,$ the statement of which is as given below.

\begin{theorem}\label{genth}
Let $S$ be a subset of unit vectors in $\mathbb{C}^d$ with $\left\vert{S}\right\vert > 2$. Then $S$ is equiangular in $\mathbb{C}^d$ if, and only if, $\lbrace P_{\xi} : \xi \in S \rbrace$ is equiangular in $M_d$. In this case, $\lbrace P_{\xi} : \xi \in S \rbrace$ is linearly independent in $M_d$.
\end{theorem}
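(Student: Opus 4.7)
The plan is to adapt, almost verbatim, the proof of Theorem \ref{summth}(a), since the only tools used there -- the inner product formula for rank-one projections and the admissibility condition -- do not depend on the ambient dimension. The text has already noted that the analogue of Remark \ref{rankoneproj} holds without modification, and Definition \ref{defequiortho} together with Remark \ref{admissrem} have already been declared valid for arbitrary $d$.

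First, I would invoke the $d$-dimensional analogue of Remark \ref{rankoneproj}: for unit vectors $\xi,\eta \in \mathbb{C}^d$, one has $\langle P_\xi, P_\eta\rangle = tr(P_\eta P_\xi) = |\langle \xi, \eta\rangle|^2$. The equivalence of equiangularity for $S$ and for $\{P_\xi : \xi \in S\}$ then follows at once, since it reduces to the observation that $\{|\langle \xi,\eta\rangle|^2 : \xi \neq \eta\}$ and $\{|\langle \xi,\eta\rangle| : \xi \neq \eta\}$ are simultaneously singletons.

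For the linear independence part, I would let $a \in [0,1)$ denote the common value of $|\langle \xi, \eta\rangle|^2$ for $\xi \neq \eta \in S$, noting that $a \neq 1$ is forced by admissibility (Definition \ref{defequiortho}(b), which rules out pairs that are scalar multiples of each other). Assume, toward a contradiction, that there is a finite nonempty $S_1 \subseteq S$ and a nonzero tuple $(\alpha_\xi)_{\xi \in S_1}$ with $\sum_{\xi \in S_1}\alpha_\xi P_\xi = 0$. Taking the $M_d$-inner product of this identity with $P_\eta$ for each $\eta \in S_1$ gives
\begin{equation*}
0 \;=\; \alpha_\eta + a\sum_{\xi \in S_1,\, \xi \neq \eta}\alpha_\xi.
\end{equation*}
Subtracting two such equations for distinct $\eta_1,\eta_2 \in S_1$ yields $(1-a)(\alpha_{\eta_1} - \alpha_{\eta_2}) = 0$; since $a \neq 1$, all $\alpha_\eta$ coincide with some common value $c$. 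Plugging this back gives $c\bigl(1+(|S_1|-1)a\bigr) = 0$, and as $1+(|S_1|-1)a \geq 1 > 0$, we conclude $c = 0$, contradicting the choice of the nonzero tuple.

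The only conceptual point that needs checking in the passage from $d=2$ to general $d$ is that the admissibility inequality $a < 1$ and the identity $\langle P_\xi, P_\eta\rangle = |\langle \xi,\eta\rangle|^2$ are genuinely dimension-free; both are built into the definitions. Thus I do not expect any serious obstacle: unlike the $d=2$ case, one cannot argue $a > 0$ from cardinality alone (since orthonormal subsets of size $|S| \leq d$ do exist), but this causes no trouble, because the argument above goes through for every $a \in [0,1)$.
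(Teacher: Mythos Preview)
Your proposal is correct and follows exactly the approach the paper indicates: it does not give a separate proof of Theorem~\ref{genth} but simply says the proof of Theorem~\ref{summth}(a) can be adapted to general $d$, and your write-up carries out precisely that adaptation. Your closing remark that the argument still succeeds when $a=0$ is a welcome clarification, since the paper's convention ``to avoid trivialities we take $|S|>d$'' in the higher-dimensional setting would otherwise be needed to rule that case out.
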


It is pertinent to point out that certain analogues of $(b)$ of Theorem \ref{summth} have been conjectured by Renes, Blume-Kohout, Scott and Caves \cite{Renes} and Zauner \cite{Zauner} in stronger forms or particular forms involving the Weyl-Heisenberg group (among other things). However, these have been proved only for some special cases. The rest of the article is devoted to an idea of that which brings in connections with Fibonacci numbers and Lucas numbers, in particular.

\subsection{Symmetric Informationally Complete Positive Operator Valued Measure (SIC-POVM) and Equiangular Lines}\label{sicpovm}
SIC-POVMs were introduced by Renes, Blume-Kohout, Scott and Caves \cite{Renes}. There, details of some basic examples, basic properties and the relationship to frames and spherical designs without knowing \cite{Zauner} and geometrical regular simplexes in $\mathbb{R}^{d^{2}}-1$ were given. We will confine over attention to the basics.

\subsubsection{The Concept}
As indicated in Subsection \ref{twodim} and Subsection \ref{highdim}, a set ${\cal S}=\lbrace S_t :1 \leq t \leq \tau \rbrace$ of positive operators on $\mathbb{C}^d$ can help to determine $T$ in the linear  span $E$ of ${\cal S}$ via $\lbrace tr(TS_t): 1 \leq t \leq \tau \rbrace$, called {\it measurements}; further for efficiency, $S_t$'s better be suitable positive multiples of projections of rank one, i.e. $P_{\xi_t}$'s with $\xi_t$'s being unit vectors. Any such ${\cal S}$ with $\sum_{t=1}^\tau S_t=I$ will be called a {\it positive operator valued measure}, in short POVM. At times, we will call $S=\lbrace \xi_t : 1 \leq t \leq \tau \rbrace$ also a POVM. Let us summarize our observations in what follows.

\begin{remarks}\label{sumsicpovm}
$(a)$ Clearly, $E=M_d$ if, and only if, ${\cal S}$ contains a linearly independent set of $d^2$ operators and, a fortiori, only if $\tau \geq d^2$. Such a POVM is called {\it IC-POVM}, where IC stands for {\it informationally complete}. 

$(b)$ It is clear from the paragraph just before Remark \ref{sumhd}, that 
\begin{center}
$\lbrace \frac{1}{d+1}P_{\xi_j^s} : 1\leq s \leq d+1, 0\leq j \leq d-1 \rbrace $
\end{center}
is an IC-POVM, though not linearly independent. The subset 
\begin{center}
$\lbrace \frac{1}{d+1}P_{\xi_j^s} : 1\leq s \leq d+1, 1\leq j \leq d-1 \rbrace \cup \lbrace \frac{1}{d+1} P_{\xi_0^1} \rbrace$
\end{center}
is linearly independent, it spans $M_d$ but the elements in it 
do not add to $I$.

$(c)$ What is appealing is the situation when there is {\it symmetry}, in the sense that $\tau=d^2$, $d{\cal S}=\{dS: S \in {\cal S}\}$ is equiangular with common angle $cos^{-1}(\frac{1}{d+1})$, or equivalently, $S=\lbrace \xi_t : 1\leq t \leq d^2 \rbrace$ is equiangular with common angle $cos^{-1}(\frac{1}{\sqrt{d+1}})$. Such an IC-POVM, if any, will be called a {\it symmetric informationally complete POVM}, in short SIC-POVM, or even SIC.

$(d)$ SIC-POVMs have been displayed, proved to exist for certain $d$'s, approximately determined by computer for more $d$'s and conjectured to exist for all $d$'s. The conjectures have become stronger or taken different forms over the time, but their proofs continue to be elusive. An idea follows in the order and form that we like.
\end{remarks}

\subsubsection{The Renes-Kohout-Scott-Caves Conjecture}\label{Renes}
The statement of the conjecture is this: There exists a unit vector $\xi \in \mathbb{C}^d$ such that 
\begin{center}
$S_{\xi}=\lbrace \xi_{jk}=D_{jk}\xi :0 \leq j, k \leq d-1 \rbrace$
\end{center}
is a SIC-POVM. 

We collect further observations in the following remark:

\begin{remarks}\label{sicpovm}
$(a)$ Such a SIC-POVM may be termed as {\it covariant} with the group $\mathbb{Z}_d \times \mathbb{Z}_d$ in the sense that $(j, k)\rightarrow D_{jk}$ is a projective or a ray representation of $\mathbb{Z}_d \times \mathbb{Z}_d$, (with addition modulo $d$) such that\\
($\alpha$) for each $(j^\prime, k^\prime) \in \mathbb{Z}_d \times \mathbb{Z}_d$, $S_{\xi}$ is invariant under $D_{j^\prime k^\prime}$, i.e., for $(j,k)$ in $Z_d \times Z_d$, there is $(j^{\prime \prime},k^{\prime \prime})$ in $Z_d  \times Z_d$ that satisfies  $D_{j^\prime k^\prime}\xi_{jk}=\xi_{j^{\prime \prime}k^{\prime \prime}}$ up to a phase, (group invariance) and\\
($\beta$) for any $(j,k)$, $(j^\prime, k^\prime) \in \mathbb{Z}_d \times \mathbb{Z}_d$ there exists $(j^{\prime \prime}, k^{\prime \prime}) \in \mathbb{Z}_d \times \mathbb{Z}_d$ such that $D_{j^{\prime \prime} k^{\prime \prime}}\xi_{jk}=\xi_{j^\prime k^\prime}$ up to a phase (transitivity).

$(b)$ Vector $\xi$ in $(a)$ above is called a {\it fiducial vector}. Clearly, in that case each $\xi_{jk}$ may be taken as a fiducial vector.

$(c)$ One may replace $\mathbb{Z}_d \times \mathbb{Z}_d$ by a group $G$ and $(j, k) \rightarrow D_{jk}$ by a projective representation of $G$ on $\mathbb{C}^d$ to generalize the concept.

$(d)$ The advantage in $(a)$ is: to check that $S_{\xi}$ is equiangular, it is enough to check that 
\begin{center}
$\lbrace \vert\langle \xi, D_{jk}\xi \rangle\vert, (j,k) \neq (0, 0) \rbrace$
\end{center}
is a singleton other than $\{1\}$. This may be verified by Example 3.7.

It is instructive to look at the details for $d=4$ as in \cite{Renes}, for instance and note that it involves $\sqrt{5}$ instead of $\sqrt{3}$ for $d=2$, a fact of importance for certain higher $d$'s as elaborated in the next two subsections.
\end{remarks}

\subsection{SIC-POVM and Algebraic Number Theory}\label{sicant}
This subsection is essentially a summary of a some extracts from \cite{NewaddedApple}, \cite{Applebytower}, \cite{Applebyray} and \cite{Bengtsson} in a form and order that we have preferred. The purpose is to display some deep relationships between SIC-POVMs and algebraic number theory.

It is clear from Example 3.7 and the discussion in the previous subsection, that SIC vectors are likely to be expressed in terms of algebraic numbers. 

In view of the comments made in Subsection \ref{highdim} and Remark \ref{sicpovm} of the previous section, for a rank one projection $P_{\xi}$ and for $j, k \in \mathbb{Z}_d$, $tr((P_{\xi} D_{jk})=\langle \xi, D_{jk} \xi \rangle$. So by Remark 3.13, it follows that $P_{\xi}$ is a fiducial projector or projection (i.e., $\xi$ is a fiducial vector) if, and only if, for $j, k \in \mathbb{Z}_d$, $(j,k) \neq (0,0)$ one has, 
\begin{center}
$tr(P_{\xi}D_{jk})=\frac{e^{i\theta_{jk}}}{\sqrt{d+1}}$
\end{center}
for some phase factor $e^{i\theta_{jk}}$. Next, in view of the discussion in item \ref{whs}, 
\begin{center}
$P_{\xi}=\frac{1}{d}\sum_{j,k \in \mathbb{Z}_d}\frac{e^{i\theta_{jk}}}{\sqrt{d+1}}D_{jk}^*.$
\end{center}
Thus $P_{\xi}$ can be constructed from phase factor $e^{i\theta_{jk}}$'s. 

For $d=4$, the phase factors turn out to be $\pm u, \pm\frac{1}{u}, -1$ with $u=\frac{\sqrt{5}-1}{2\sqrt{2}}+i \frac{\sqrt{\sqrt{5}+1}}{2}$. The smallest number field $Q(u)$ containing $Q$, the field of rationals and $u,$ can be seen to be $Q(\sqrt{5}, \sqrt{2}, i\sqrt{\sqrt{5}+1})$. Its minimal polynomial is $t ^8-2t^6-2t ^4-2t^2+1$. So $Q(u)$ is a vector space over $Q$ of dimension $8$. We refer the reader to Bengtsson \cite{Bengtsson} for all the details, where an indication for larger $d$'s is also provided. 
 
Let $d\geq 4$ and $D$ be the square free part of $(d-3)(d+1)=(d-1)^2-4$. Let $P_{\xi}$ be a SIC-POVM fiducial projector, if any, and let $\mathbb{E}=Q(P_{\xi}, \tau)=\tau_d$ be the field generated over the rationals by the standard matrix basis elements of $P_{\xi}$ together with $\tau_d$. Appleby, Flammina, McConnell and Yard present various facts observed by them in known cases \cite{Applebyray}. We state only a few of those, to give a feel of the relationship between SIC-POVMs and algebraic number theory.

$(a)~\mathbb{E}$ is an extension of $\mathbb{K}=Q(\sqrt{D})$.\\
$(b)~Gal(\mathbb{E}/\mathbb{K})$ is abelian.

To get an idea of sparseness, we note that
\begin{eqnarray}
d&=& 7 , 35, 199, 1155, 6727, 39203, 228487, \cdots \textrm{for} ~D=2,\nonumber\\
d&=& 15, 53, 195, 725, 2703, 10085, \cdots \textrm{for}~ D=3,\nonumber\\
d&=& 4, 8, 19, 48, 124, 323, 844,  \cdots \textrm{for}~ D=5,\nonumber
\end{eqnarray}

But the fact remains that while we keep adding more known cases possessing SIC-POVMs but we keep shifting from one conjecture to another. 

Finally, the next subsection is devoted to the case $D=5$ and involves Fibonacci matrix and Lucas numbers, thereby taking us back to the objective of this survey. 

\subsection{Fibonacci-Lucas SIC-POVMs}\label{flsic}
It may be remarked that the development by Grassl and Scott \cite{ScottGrassl} motivated us to write this article, in fact. They presented a conjectured family of symmetric SIC-POVMs which have an additional symmetry group, whose size grows with the dimension. It may be mentioned here that Scott is a pioneer both in giving exact solutions and in providing numerical solutions to a high level of precision or approximation and later collaborated with Grassl. The solutions spread over many publications by different authors are available together at the following website (as reported in \cite{FuchStacy}): http://www.physics.umb.edu/Research/QBism. 

\subsubsection{Additional restrictions and Zauner's Conjecture} 
For finding a fiducial vector $\xi$, when one employs an exact search or uses a method involving numerical bounds, it helps to know if $\xi$ can be chosen with more restrictions. A beginning was made by the originator Zauner himself, who observed that, in known cases $\xi$ is in a particular eigenspace of an order three unitary and turned it into a conjecture now called {\it Zauner's Conjecture}.

We summarize further observations in the following remark:
\begin{remarks}
$(a)$ We start with the case of odd $d$ for simpler introduction. Let $F$ be an invertible $2\times 2$ matrix over $\mathbb{Z}_d$ with determinant one. This induces a unitary symmetry on the Weyl-Heisenberg group (ignoring phase factors) in the sense that there is a unitary $U_F$ on $M_d$ such that
\begin{equation}
U_F D_{jk}U_F^*=D_{j^\prime k^\prime}\nonumber
\end{equation}
with 
\begin{equation}
\left( \begin{array}{c}
j^\prime \\
k^\prime
\end{array} \right)=F\left( \begin{array}{c}
j \\
k
\end{array} \right)\nonumber
\end{equation}
for $j, k \in \mathbb{Z}_d$.
This $U_F$ is unique upto an overall phase.

$(b)$ If we take $F$ in $(a)$ to have determinant $-1$ instead, then $U_F$ will be anti-unitary in the sense that $\langle U_Fx, U_Fy\rangle=\langle y, x \rangle$ for $x, y \in \mathbb{C}^d$.

$(c)$ If we take $d$ even in $(a)$ and $(b)$ above, then $F$ will have to be considered in $\mathbb{Z}_{d^\prime}$ with $d^{\prime}=2d$.

$(d)$ Zauner takes $F$ to be 
\begin{equation}
F_Z=\left( \begin{array}{cc}
0 & -1 \\
1 & -1
\end{array} \right).\nonumber
\end{equation}
In fact, for $d  \neq 3 ~(mod~ 9),$ every canonical order three unitary is equivalent to $F_Z$, though for $d=9l+3$, $l\geq 1$, $F_a=\left( \begin{array}{cc}
1 & 3 \\
2l & -2
\end{array} \right)$ is also a canonical order three unitary which is not conjugate to $F_Z$.
\end{remarks}

$(e)$ Grassl and Scott \cite{ScottGrassl} consider the variant $F_f=\left( \begin{array}{cc}
0 & 1 \\
1 & 1
\end{array} \right)$ of the Fibonacci matrix $\left( \begin{array}{cc}
1 & 1 \\
 1 & 0
\end{array} \right)$. Just as in the Introductory section, $F_f$'s eigenvalues are $\phi=\frac{1+\sqrt{5}}{2}$ and $\psi=-\frac{1}{\phi}=\frac{1-\sqrt{5}}{2}$ with $\eta=\left( \begin{array}{c}
1 \\
\phi
\end{array} \right)$ and $\xi=\left( \begin{array}{c}
1 \\
\psi
\end{array} \right)$ as corresponding eigenvectors. 

Fibonacci numbers $F_n$'s are given by
\begin{eqnarray}
\left( \begin{array}{c}
F_n \\
F_{n+1}
\end{array} \right)&=& F_f^n\left( \begin{array}{c}
0 \\
1
\end{array} \right)\nonumber\\
&=&F_f^n(\frac{1}{\phi -\psi}(\eta -\xi))\nonumber\\
&=&\frac{1}{\sqrt{5}}(\phi^n \eta -\psi^n \xi)\nonumber\\
&=&\frac{1}{\sqrt{5}}\left( \begin{array}{c}
\phi ^n-\psi ^n \\
\phi^{n+1}-\psi^{n+1}
\end{array} \right)\nonumber,
\end{eqnarray}

so that $F_n=\frac{\phi ^n-\psi^n}{\phi- \psi}$ for each $n$, while the Lucas numbers $L_n$'s may be determined as follows: 

For each $n$,
\begin{eqnarray}
\left( \begin{array}{c}
L_n \\
L_{n+1}
\end{array} \right)&=& F_f^n\left( \begin{array}{c}
L_0 \\
L_1
\end{array} \right)\nonumber\\
&=& F_f^n\left( \begin{array}{c}
2 \\
1
\end{array} \right)\nonumber\\
&=&F_f^n(\eta + \xi)\nonumber\\
&=&\phi^n \eta+\psi^n \xi \nonumber\\
&=&\left( \begin{array}{c}
\phi ^n+\psi ^n \\
\phi^{n+1}+\psi^{n+1}
\end{array} \right).\nonumber
\end{eqnarray}
So, $L_n=\phi^n+\psi^n$ for all $n$.

Finally we see that $F_f^n$ is simply $\left( \begin{array}{cc}
F_{n-1} & F_n \\
F_n & F_{n+1}
\end{array} \right)$ for all $n>0$.

\subsubsection{The Scott and Grassl Conjecture}
The conjecture is the statement: For the infinite sequence of dimensions $d_k=\phi ^{2k}+\phi^{-2k}+1$, where $\phi=\frac{\sqrt{5}+1}{2}$, $k\geq 1$, there exists a Weyl-Heisenberg covariant SIC-POVM that has an additional anti-unitary symmetry of order $6k$ given by the Fibonacci matrix
\[F_f=\left( \begin{array}{cc}
0 & 1 \\
1 & 1
\end{array} \right).\]
We term such a set of vectors a {\it Fibonacci-Lucas SIC-POVM}.

Let us record some facts proved in \cite{ScottGrassl}.

\begin{remarks}
$(a)$ The sequence $d_k$ obeys the linear recurrence relation
\begin{equation}
d_{k+3} = 4d_{k+2}-4d_{k+1} + d_k.\nonumber
\end{equation}

$(b)$ Considering $modulo~ 3$, the sequence $\tilde{d_k} =d_k ~(mod ~3) $ has period four and is given by $1, 2, 1, 0, 1, 2, 1, 0, \cdots$, (for $k=1, 2, 3, \cdots$). This implies that $d_k$ is divisible by $3$ if, and only if, $k=4l$. Then $d_{4l}=3~ (mod ~ 9)$.

$(c)$ The square-free part of $(d_k+1)(d_k-3)$ equals $5$.

$(d)$ We consider $F_f$ and its powers with entries in $\mathbb{Z}_d$ with $d=d_k$. Then $F_f$ has order $6k$.
\end{remarks}

Based on these observations, these authors presented the conjectured family of symmetric SIC-POVMs which have an additional symmetry group (generated by $F_f$), whose size grows with the dimension. While the symmetry group is related to Fibonacci numbers, the dimension is related to Lucas numbers. 

The conjecture is demonstrated by exact solutions for dimensions $d=4, 8, 19, 48, 124$ and $323$ and a numerical solution for $d=844,$ as well.

Details of exact solutions and numerical solutions are provided in \cite{ScottGrassl} which can be found in the paper and online at http://sicpovm.marks-grassl.de. 

\subsection{Concluding Remarks}
Fibonacci numbers are everywhere and so are polynomials as we saw above, for instance. The second author and Somshubhro Bandyopadhyay \cite{Som} have given polynomial representations of quantum entanglements. Let us also point to the fact that Fibonacci numbers enter the scene in polynomial representation of quantum entanglement of Resonance Valence Bond states as a small part of an ongoing work of the second author with Aditi Sen De and Ujjwal Sen.

\section{Acknowledgements}
A preliminary version of this exposition formed parts of the talks at the ``International Conference on Linear Algebra and its Applications''-ICLAA 2017 organised at the Manipal University. The authors discussed the plan for the present survey article at the conference. They thank the organizers, in particular K. Manjunatha Prasad for the opportunity. Ajit Iqbal Singh also thanks the Indian National Science Academy for continuous support and the Indian Statistical Institute, New Delhi for excellent research facilities.

\newpage


\begin{thebibliography}{50}
\bibitem{Altin} 
E. Altin\c sik, N. Feyza Yal\c cin and \c S. B\"uy\"ukk\"ose, \emph{Determinants and inverses of circulant matrices with complex Fibonacci numbers}, Spec. Matrices, {\bf 3} (2015) 82-90.
\bibitem{Applebytower} 
M. Appleby, I. Bengtsson, I. Dumitru, S. Flammia, \emph{Dimension towers of SICs. I. Aligned SICs and embedded tight frames}, arXiv:1707.09911.
\bibitem{Applebyray}  
M. Appleby, S. Flammia, G. McConnell and J. Yard, \emph{Generating ray class fields of real quadratic fields via complex equiangular lines}, arXiv:1604.06098.
\bibitem{NewaddedApple} 
M. Appleby, S. Flammia, G. McConnell and J. Yard, \emph{SICs and algebraic number theory}, Found. Phys., (2016), arXiv:1701.05200. 
\bibitem{Som}
S. Bandyopadhyay, Ajit Iqbal Singh, \emph{Polynomial representation of Quantum entanglement}, Problems and Recent Methods in Operator Theory, AMS Contemporary Mathematics, {\bf 687} (2017) 31-44.
\bibitem{Bengtsson} 
I. Bengtsson, \emph{The number behind the simplest SIC-POVM}, Found. Phys., (2016), arXiv:1611.09087.
\bibitem{College} 
N. D. Cahill, J. R. D'Errico, D. A. Narayan, J. Y. Narayan, \emph{Fibonacci determinants}, College Math. J., {\bf 33} (2002) 221-225.
\bibitem{Naru} 
N. D. Cahill, D. A. Narayan, \emph{Fibonacci and Lucas numbers as tridiagonal matrix determinants}, Fibonacci Quart., {\bf 42} (2004) 216-221.
\bibitem{cgps}
S. Chaturvedi, S. Ghosh, K.R. Parthasarathy and Ajit Iqbal Singh, \emph{Optimal quantum state determination by constrained elementary measurements}, arXiv:1411.0152.
\bibitem{cms} 
S. Chaturvedi, N. Mukunda and R. Simon, \emph{Wigner distributions for finite-state systems without redundant phase-point operators}, J. Phys. A. Math. Theor., {\bf 43} (2010), 0753075302, 21 pp.
\bibitem{Li} 
Li Ching, \emph{The maximum determinant of an $n\times n$ lower Hessenberg $(0,1)$ matrix}, Lin. Alg. Appl., {\bf 183} (1993) 147-153.
\bibitem {Far} 
M. Farber and A. Berman, \emph{A contribution to the connections between Fibonacci numbers and matrix theory}, Involve, {\bf8} (2015) no. 3, 491-501.
\bibitem{Feng} 
J. Feng, \emph{Fibonacci identities via the determinant of tridiagonal matrix}, Appl. Math. Comput., {\bf 217} (2011) 5978-5981.
\bibitem{Fu} 
X. Fu, X. Zhou, \emph{On matrices related with Fibonacci and Lucas numbers}, Appl. Math. Comput., {\bf 200} (2008) 96-100.
\bibitem{FuchHuang} 
C. A. Fuchs, M. C. Hoang, B. C. Stacey, \emph{The SIC question: history and state of play}, Axioms, {\bf 6} (3), (2017) 20 pages, arXiv:1703.07901.
\bibitem{FuchStacy} 
C. A. Fuchs and B. C. Stacey, \emph{QBism: Quantum theory as a hero's handbook}, arXiv:1612.07308.
\bibitem{Gao} 
Y. Gao, Z. L. Jiang and Y. P. Gong, \emph{On the determinants and inverses of skew circulant and skew left circulant matrices with Fibonacci and Lucas numbers}, WSEAS Transactions on Mathematics, {\bf 12} (2013) 472-481.
\bibitem{Geller} 
D. Geller, I. Kra, S. Popescu and S. Simanca, \emph{On circulant matrices}, pdf available at www.math.stonybrook.edu
\bibitem{gs}
S. Ghosh and Ajit Iqbal Singh, \emph{Invariants for maximally entangled vectors and unitary bases},  arXiv:1401.0099.
\bibitem{ScottGrassl} 
M. Grassl and A. Scott, \emph{Fibonacci-Lucas SIC-POVMs}, J. Math. Phys., {\bf 58} (2017) 122201, 14 pp.
\bibitem {Huang} 
L. H. Huang, B. S. Tam and S. H. Wu, \emph{Graphs whose adjacency matrices have rank equal to the number of distinct nonzero rows}, Lin. Alg. Appl., {\bf 438} (2013) 4008-4040. 
\bibitem{Gong} 
Z. L. Jiang, Y. P. Gong and Y. Gao, \emph{Circulant type matrices with the sum and product of Fibonacci and Lucas numbers}, Abstr. Appl. Anal., {\bf 2014} (2014) 12 pp.
\bibitem{kFibo} 
Z. L. Jiang, Y. P. Gong and Y. Gao, \emph{Invertibiity and explicit inverses of circulant-type matrices with $k$-Fibonacci and $k$-Lucas numbers}, Abstr. Appl. Anal., {\bf 2014} (2014) 9 pp.
\bibitem{Zhao} 
Z. L. Jiang and Y. L. Wei, \emph{Skew circulant type matrices involving the sum of Fibonacci and Lucas numbers}, Abstr. Appl. Anal., {\bf 2015} (2015) 9 pp.
\bibitem{Jian} 
Z. L. Jiang, J. J. Yao and F. L. Lu, \emph{On skew circulant type matrices involving any continuous Fibonacci numbers}, Abstr. Appl. Anal., {\bf 2014} (2014) 10 pp.
\bibitem{Erdal} 
E. Karaduman, \emph{An application of Fibonacci numbers in matrices}, Appl. Math. Comput., {\bf 147} (2004) 903-908.
\bibitem{Erdal1} 
E. Karaduman, \emph{On determinants of matrices with general Fibonacci numbers entries}, Appl. Math. Comput., {\bf 167} (2005) 670-676.
\bibitem{Kra} 
I. Kra and S. Simanca, \emph{On circulant matrices}, American Mathematical Society, {\bf 59, no. 3} (2012) 368-377.
\bibitem{LK}  
G. Y. Lee and J. S. Kim, \emph{The linear algebra of the $k$ Fibonacci matrix}, Lin. Alg. Appl., {\bf 373} (2003) 75-87.
\bibitem{LKC} 
G. Y. Lee, J. S. Kim and S. H. Cho, \emph{Some combinatorial identities via Fibonacci numbers}, Discrete Appl. Math., {\bf 130}, (2003) 527-534.
\bibitem{LKL} 
G. Y. Lee, J. S. Kim and S. G. Lee, \emph{Factorizations and eigenvalues of Fibonacci and symmetric Fibonacci matrices}, Fibonacci Quart., {\bf 40} (2002) 203-211.
 \bibitem{Lem}
 P. W. H. Lemmens and J. J. Seidel, \emph{Equiangular lines}, Journal of Algebra, {\bf 24} (1973) 494-512. 
\bibitem{Lind} 
D. A. Lind, \emph{A Fibonacci Circulant}, Fibonacci Quart., {\bf 8} (1970) 449-455.
\bibitem{mankcs}
Manami Chatterjee and K.C. Sivakumar, \emph{Group invertible $\{0,1\}$-matrices and Fibonacci sequences}, under preparation. 
\bibitem{Nalli} 
A. Nalli, H. Civciv, \emph{A generalization of tridiagonal matrix determinants, Fibonacci and Lucas numbers}, Chaos, Solitons and Fractals, {\bf 40} (2009) 355-361.
\bibitem{Renes} 
J. M. Renes, R. Blume-Kohout, A. J. Scott and C. M. Caves, \emph{Symmetric informationally complete quantum measurements}, J. Math. Phys., {\bf 45} (2004) 2171-2180.
\bibitem{sv1}
M. Shalaby and A. Vourdas, \emph{Mutually unbiased projectors and duality between lines and bases in finite quantum systems}, Ann. Physics {\bf 337} (2013) 208-220.
\bibitem{sv2}
M. Shalaby and A. Vourdas, \emph{Tomographically complete sets of orthonormal bases in finite systems}, J. Phys. A {\bf 44} (2011) 345303, 21 pp.
\bibitem{Shen} 
S. Q. Shen, J. M. Cen and Y. Hao, \emph{On the determinants and inverses of circulant matrices with Fibonacci and Lucas numbers}, App. Math. Comput., {\bf 217}, (2011)  9790-9797.
\bibitem{Strang} 
G. Strang, \emph{Introduction to Linear Algebra}, second ed., Wellesley-Cambridge, Wellesley(MA), (1998).
\bibitem{Tasci} 
D. Tasci, E. Kilic, \emph{On the order-$k$ generalized Lucas numbers}, Appl. Math. Comput., {\bf 155} (2004) 637-641.
\bibitem{Pavel} 
P. Trojovsk\'y, \emph{On a sequence of tridiagonal matrices whose determinants are Fibonacci numbers $F_{n+1}$}, International Journal of Pure and Applied Mathematics, {\bf 102} (2015) 527-532.
\bibitem{WW} 
W. Wang and T. Wang, \emph{Identities via Bell matrix and Fibonacci matrix}, Discrete Appl. Math., {\bf 156} (2008) 2793-2803.
\bibitem{Zauner} 
G. Zauner, \emph{Quantum designs: foundations of a noncommutative design theory}, Int. J. Quantum Inf., {\bf 9} (2011) 445-508 (English version of Ph. D. thesis at University of Wien (
1999)).
\bibitem{ZW} 
Z. Zhang and X. Wang, \emph{A factorization of the symmetric Pascal matrix involving the Fibonacci matrix}, Discrete Appl. Math., {\bf 155}, (2007) 2371-2376.
\end{thebibliography}
\end{document}